\theoremstyle{plain}
\newtheorem{thm}{Theorem}[section]
\newtheorem{prop}[thm]{Proposition}
\newtheorem{cor}[thm]{Corollary}
\theoremstyle{definition}
\newtheorem{defn}[thm]{Definition}
\newtheorem*{ex*}{Example}
\theoremstyle{remark}
\numberwithin{equation}{section}
\newcommand{\dd}{\,{\rm d}}
\newcommand{\id}{{\rm Id}}
\newcommand{\tr}{{\rm tr}}
\renewcommand{\S}{\mathcal{S}^{2\times 2}}
\newcommand{\Sz}{\mathcal{S}^{2\times 2}_0}
\renewcommand{\div}{{\rm div}}
\newcommand{\half}{\frac{1}{2}}
\begin{document}

\title{The 2-d isentropic compressible Euler equations may have infinitely many solutions which conserve energy}

\author{Simon Markfelder
\and Christian Klingenberg 
}

\date{\today}

\maketitle

\bigskip

\centerline{Dept. of Mathematics, W\"urzburg University, Germany}

\bigskip

\begin{abstract} 
	We consider the 2D isentropic compressible Euler equations. It was shown in \cite{chio15} that there exist Riemann initial data as well as Lipschitz initial data for which there exist infinitely many weak solutions that fulfill an energy \emph{in}equality. In this note we will prove that there is Riemann initial data for which there exist infinitely many weak solutions that conserve energy, i.e. they fulfill an energy equality. As in the aforementioned paper we will also show that there even exists Lipschitz initial data with the same property.
\end{abstract}


\tableofcontents 

\section{Introduction and main result} 
For hyperbolic conservation laws the quest for showing that the initial value problem is well-posed is more than a century old. The focus has been on the compressible Euler equations in particular. In one space dimensions the seminal work of Glimm \cite{glimm65} and DiPerna \cite{diperna83} gave rise to the hope that this goal might be achievable. In the two dimensional case progress has been made to this end for particular self similar solutions, see Chen and Feldman \cite{chen16}. In all these cases an admissibility condition, which mimics the 2nd law of thermodynamics, needs to be invoked. Thus results for the equations of two-dimensional isentropic compressible gas dynamics, that prove that with these admissibility conditions the initial value problem may lead to infinitely many admissible weak solutions (see \cite{dls10}) have been met with quite a surprise. Even in situations, where one can solve the initial value problem explicitly via an admissible solution, one could prove that in addition to this standard solution infinitely many other admissible weak solutions exist \cite{chio15}, \cite{chio14} and \cite{mk17}. The hunt is open for finding new admissibility conditions proving this line of research a fluke. 

The admissibility criterion used so far is entropy dissipation. For the isentropic Euler equations energy takes the role of entropy, so this means that one was seeking weak solutions that dissipate energy. So could one rid oneself of these infinitely many extra solution by demanding the more physical criterion that the solutions conserve energy? In this paper we show that this is not the case. We can show that there exist Lipschitz initial data that may lead to infinitely many solutions of the two dimensional Euler equations all of which conserve energy.

To this end we consider the Cauchy problem for the 2-d isentropic compressible Euler system, i.e. 
\begin{equation}
	\begin{split}
		\partial_t \varrho + \div_x (\varrho\,v) &= 0, \\
		\partial_t (\varrho\,v) + \div_x(\varrho\,v\otimes v)+\nabla_x p(\varrho) &= 0, \\
		\varrho(0,x) &= \varrho_0 (x), \\
		v(0,x) &= v_0(x)
	\end{split}
	\label{eq:euler}
\end{equation}
where the unknowns, more precisely the density $\varrho=\varrho(t,x)\in\mathbb{R}^+$ and the velocity $v=v(t,x)\in\mathbb{R}^2$, are functions of time $t\in[0,\infty)$ and position $x=(x_1,x_2)\in\mathbb{R}^2$. We will focus both on Riemann and on Lipschitz initial data $(\varrho_0,v_0)$, see below. 

Furthermore we consider the polytropic pressure law, more specifically $p(\varrho)=\varrho^2$. However the results can be extended to more general pressure laws 
\begin{equation}
	p(\varrho)=K\,\varrho^\gamma,
\label{eq:pressure}
\end{equation}
where $K>0$ and $\gamma\geq 1$ are constants. 

In this paper the aim is to find energy conserving weak solutions, which are defined as follows.

\begin{defn} \emph{(energy conserving weak solution)}
	A weak solution to \eqref{eq:euler} is energy conserving if the energy equality 
	\begin{equation}
		\partial_t\bigg(\half\,\varrho\,|v|^2 + P(\varrho)\bigg)+\div_x\bigg[\bigg(\half\,\varrho\,|v|^2 + P(\varrho)+p(\varrho)\bigg)v\bigg] = 0,
	\label{eq:eneq}
	\end{equation}
	holds in the weak sense, where the pressure potential is given by $P(\varrho)=\varrho^2$. In the case of more general pressures \eqref{eq:pressure} the pressure potential reads $P(\varrho)=\frac{K}{\gamma-1}\,\varrho^\gamma$ in the case $\gamma>1$ and $P(\varrho)=K\,\varrho\,\log(\varrho)$ for $\gamma=1$.
\end{defn}

In the first part of the paper we consider Riemann initial data, i.e. 
\begin{equation}
\begin{split}
	(\varrho,v)(0,x) &= (\varrho_0,v_0)(x):=\left\{
	\begin{array}[c]{ll}
	(\varrho_-,v_-) & \text{ if }x_2<0 \\
	(\varrho_+,v_+) & \text{ if }x_2>0
	\end{array}
	\right. ,
\end{split}
\label{eq:riemann}
\end{equation}
where $\varrho_\pm\in\mathbb{R}^+$ and $v_\pm\in\mathbb{R}^2$ are constants.

In the past the system \eqref{eq:euler} with initial data \eqref{eq:riemann} was often discussed concerning uniqueness of so-called \emph{admissible} weak solutions defined as follows.

\begin{defn} \emph{(admissible weak solution)} 
	A weak solution to \eqref{eq:euler} is admissible if the energy \emph{in}equality
	\begin{equation}
		\partial_t\bigg(\half\,\varrho\,|v|^2 + P(\varrho)\bigg)+\div_x\bigg[\bigg(\half\,\varrho\,|v|^2 + P(\varrho)+p(\varrho)\bigg)v\bigg] \leq 0.
	\label{eq:enineq}
	\end{equation}
	holds in the weak sense.
\end{defn}

It was shown by exploiting the convex integration method \cite{dls09}, \cite{dls10} that for some initial states $(\varrho_\pm,v_\pm)\in\mathbb{R}^+ \times\mathbb{R}^2$ there exist infinitely many admissible weak solutions, see \cite{chio14}, \cite{chio15} and \cite{mk17}. These infinitely many solutions that are produced by the convex integration method are usually called \emph{wild solutions}. In this note we want to show that there are initial states $(\varrho_\pm,v_\pm)\in\mathbb{R}^+ \times\mathbb{R}^2$ for which there are infinitely many weak solutions that fulfill the energy equation \eqref{eq:eneq} rather than the energy inequality \eqref{eq:enineq}. In other words we will prove that there are not only infinitely many admissible weak solutions but also infinitely many energy conserving weak solutions.

The following theorem is our main result:

\begin{thm}
	Let $p(\varrho)=\varrho^2$. There exist initial states $(\varrho_\pm,v_\pm)\in\mathbb{R}^+ \times\mathbb{R}^2$ such that there are infinitely many energy conserving weak solutions to \eqref{eq:euler}, \eqref{eq:riemann}.
\label{thm:1}
\end{thm}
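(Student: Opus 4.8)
The plan is to adapt the convex integration machinery of De Lellis--Székelyhidi \cite{dls09,dls10} as applied to the Riemann problem in \cite{chio14,chio15}, but to engineer the construction so that energy is \emph{conserved} rather than merely dissipated. The starting observation is that all the wild solutions produced by convex integration share the same density $\varrho$ and the same total energy density $\half\varrho|v|^2 + P(\varrho)$; the velocity field is split as $v = \bar v + w$, where $\bar v$ is a fixed background and $w$ oscillates. Crucially, the convex integration scheme forces $|v|^2$ to take a prescribed pointwise value (the oscillations live on a sphere in velocity space of fixed radius). This means that if one can arrange the construction so that the prescribed value of the kinetic plus internal energy, together with the prescribed momentum flux, satisfies the energy \emph{equality} \eqref{eq:eneq} in the weak sense, then \emph{every} solution in the resulting infinite family conserves energy automatically.

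First I would recall the reformulation of \eqref{eq:euler} as a linear system plus a nonlinear pointwise constraint, in the style of the ``subsolution'' framework: one introduces the momentum $m=\varrho v$ and a matrix field $\mathbb{U}\in\Sz$ (the traceless part of $\varrho\,v\otimes v$), and seeks solutions of the linear equations $\partial_t\varrho + \div_x m = 0$ and $\partial_t m + \div_x\mathbb{U} + \nabla_x\bigl(p(\varrho) + \tfrac{|m|^2}{\varrho} \cdot \tfrac{1}{2}\bigr) = 0$ satisfying the constraint that $\mathbb{U} = \tfrac{m\otimes m}{\varrho} - \tfrac12\tfrac{|m|^2}{\varrho}\,\id$ with $|m|^2/\varrho$ equal to a fixed function. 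Next I would fix a piecewise-constant subsolution adapted to the Riemann data: on the two outer regions one keeps the constant Riemann states, and in a fan-shaped middle region $\{-\sigma t < x_2 < \sigma t\}$ (or a two-shock configuration) one posits constant values $(\varrho_1, \bar v_1, C_1)$ of density, average velocity, and of the modulus $C_1 = |v|^2$ on which the oscillations sit. The algebraic conditions at the two interfaces (Rankine--Hugoniot-type jump conditions for the linear system) together with the requirement that $(\bar v_1, C_1)$ lie strictly inside the relevant convex constraint set $U$ give a finite system of equations and inequalities in the parameters.

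The key new step is to impose the energy-conservation condition as an \emph{additional algebraic constraint} on this parameter system, rather than the one-sided inequality used in the wild-solution literature. Because the density and the energy modulus $C_1$ are constant in the fan and the solution is a fixed constant state outside, the weak energy identity \eqref{eq:eneq} reduces to a single Rankine--Hugoniot jump relation across each of the two interfaces, relating the jump in the energy density to the jump in the energy flux at the propagation speed $\pm\sigma$. I would then verify, by an explicit but elementary count, that the enlarged system — jump conditions for mass and momentum, the openness condition $(\bar v_1,C_1)\in U^{\mathrm{int}}$ guaranteeing a nonempty convex integration scheme, \emph{and} the two energy-equality jump relations — still admits a solution for a suitable choice of the free Riemann states $(\varrho_\pm, v_\pm)$ (for instance with $v_\pm$ purely in the $x_1$-direction and $\varrho_-=\varrho_+$, a flat vortex-sheet-type datum). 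Once such an admissible parameter tuple is found, the convex integration theorem of \cite{dls10} (or its localized Riemann-problem version from \cite{chio14}) produces infinitely many bounded velocity fields $w$ realizing the prescribed modulus, each yielding a genuine weak solution; since the modulus $C_1$ and density were chosen precisely so that the energy jump relations hold, every one of these solutions satisfies \eqref{eq:eneq}.

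The main obstacle I anticipate is the \emph{compatibility of the energy equality with the openness condition}: in the dissipative constructions one exploits slack in the energy inequality to guarantee strict interiority of $(\bar v_1,C_1)$ in the constraint set $U$, whereas demanding energy \emph{equality} removes exactly this slack and threatens to push the state to the boundary $\partial U$, where the convex integration scheme degenerates and produces no nontrivial oscillations. Resolving this requires showing that the energy-equality manifold intersects the open set $U^{\mathrm{int}}$ transversally for an open set of Riemann data, which amounts to checking that the energy modulus $C_1$ forced by the jump conditions is strictly larger than $|\bar v_1|^2$ (the minimum compatible modulus) by a definite margin. I expect this to be the crux of the proof: a careful choice of the two wave speeds and of the intermediate density, followed by verifying the strict inequality $C_1 > |\bar v_1|^2$ at the selected parameter values, so that the available ``room'' for oscillations is genuinely nonempty while energy is exactly balanced.
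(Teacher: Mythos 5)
Your overall strategy coincides with the paper's: piecewise-constant fan subsolutions in the sense of Definition \ref{defn:fansubs}, with the energy inequality at the two interfaces replaced by equalities, reduced via Proposition \ref{prop:alggl} to an algebraic system to be solved together with the strict subsolution conditions. You also correctly identify the crux, namely that the equality constraints threaten to push the intermediate state to the boundary of the constraint set. However, at the decisive step there is a genuine gap. The one concrete choice of data you offer --- $\varrho_-=\varrho_+$ with $v_\pm$ purely in the $x_1$-direction (a flat vortex sheet) --- provably admits \emph{no} fan subsolution at all in this framework, energy-conserving or otherwise. Indeed, with $v_{-\,2}=v_{+\,2}=0$ the mass jump conditions \eqref{eq:rhl1} and \eqref{eq:rhr1} give $\mu_0(\varrho_--\varrho_1)=-\varrho_1 v_{1\,2}$ and $\mu_1(\varrho_1-\varrho_+)=\varrho_1 v_{1\,2}$; adding these and using $\varrho_-=\varrho_+$ yields $(\mu_0-\mu_1)(\varrho_--\varrho_1)=0$, hence $\varrho_1=\varrho_\pm$ by \eqref{eq:order}, and then $v_{1\,2}=0$. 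The normal momentum condition \eqref{eq:rhl3} then forces $\varrho_1 u_{1\,11}=\varrho_1 C_1/2$, i.e.\ $\delta_1:=\frac{C_1}{2}-v_{1\,2}^2-u_{1\,11}=0$, so the strict subsolution condition \eqref{eq:sc2} reads $-(u_{1\,12})^2>0$, which is impossible. This is exactly the boundary degeneracy you worried about, and for equal-density tangential data it is unavoidable; it is consistent with the fact that all constructions of this type start from Riemann data whose self-similar solution contains a shock.

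Moreover, the remaining justification you offer --- an ``explicit but elementary count'' of equations versus unknowns --- cannot close the argument. The system of eight equations (six Rankine--Hugoniot conditions plus the two energy equalities \eqref{eq:enl}, \eqref{eq:enr}) in the eight unknowns $\mu_0,\mu_1,\varrho_1,v_{1\,1},v_{1\,2},u_{1\,11},u_{1\,12},C_1$ does have real solutions in the vortex-sheet case above, but every one of them violates the strict inequalities \eqref{eq:sc1}, \eqref{eq:sc2}; a dimension count says nothing about compatibility of the equalities with the open conditions, and the example shows they can be flatly incompatible. The paper closes precisely this gap by a non-generic, explicit choice: it takes shock-generating data $\varrho_-=1$, $\varrho_+=4$, $v_-=(0,2\sqrt{2})$, $v_+=(0,0)$, for which the two-parameter family $(\varrho_1,\delta_2)$ of \emph{admissible} fan subsolutions had already been described in \cite{mk17} (Proposition \ref{prop:exadmfansubs}), and then selects the ``apex'' of that admissible region in the $(\varrho_1,\delta_2)$-plane --- the unique point where \emph{both} energy inequalities \eqref{eq:SRcond3}, \eqref{eq:SRcond4} saturate simultaneously while $\delta_1>0$ and $\delta_2>0$ remain strict --- which yields the explicit values \eqref{eq:values} that are then verified against Proposition \ref{prop:alggl}. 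To repair your proposal you would need an equivalent step: restrict to Riemann data containing a shock and either exhibit explicit parameter values, or prove that the two energy-equality curves intersect at an interior point of the region where the subsolution conditions hold strictly.
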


This theorem will be proved by methods first presented in \cite{chio14} and \cite{chio15}. 

As a consequence of theorem \ref{thm:1} we can show the following:

\begin{cor}
	Let $p(\varrho)=\varrho^2$. There exist Lipschitz continuous initial data $(\varrho_0,v_0)$ such that there are infinitely many energy conserving weak solutions to \eqref{eq:euler}.
\label{cor:2}
\end{cor}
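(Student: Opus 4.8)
The plan is to deduce the corollary from Theorem~\ref{thm:1} by the compression-wave argument of \cite{chio15}, using repeatedly that a classical solution of \eqref{eq:euler} automatically satisfies the energy equality \eqref{eq:eneq}. First I would fix Riemann states $(\varrho_\pm,v_\pm)$ as furnished by Theorem~\ref{thm:1}, so that \eqref{eq:euler}, \eqref{eq:riemann} admits infinitely many energy conserving weak solutions. By the fan construction underlying that theorem, all of these solutions are obtained by placing the convex-integration fan between two self-similar interfaces through $(0,0)$, and hence coincide with the constant states $(\varrho_\mp,v_\mp)$ outside a forward space-time cone $\{\nu_- t < x_2 < \nu_+ t\}$ emanating from the origin. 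The point to record is that, by translation invariance of \eqref{eq:euler}, the very same family of solutions, recentred at an arbitrary point $(T,0)$ with $T>0$, solves the Riemann problem with the jump located there.

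Next I would prepend a classical simple wave to these continuations. Choosing the states in Theorem~\ref{thm:1} to lie on a single genuinely nonlinear wave curve of the one-dimensional system in the $x_2$-variable, I would construct a centred \emph{compression} wave $(\bar\varrho,\bar v)(t,x_2)$ on the slab $[0,T]\times\mathbb{R}^2$: Lipschitz (indeed smooth) initial data $(\varrho_0,v_0)(x_2)$ interpolating monotonically between $(\varrho_-,v_-)$ for $x_2\ll 0$ and $(\varrho_+,v_+)$ for $x_2\gg 0$, with the transition profile tuned so that all characteristics of the compressive family focus at the single point $(T,0)$. For $0\le t<T$ the characteristics do not cross, so $(\bar\varrho,\bar v)$ is a classical solution and therefore conserves energy; as $t\uparrow T$ its trace degenerates precisely to the Riemann jump from $(\varrho_-,v_-)$ to $(\varrho_+,v_+)$ concentrated at $x_2=0$.

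I would then define, for each admissible choice of continuation, a global function that equals the compression wave on $[0,T)$ and equals one of the infinitely many energy conserving Riemann solutions of Theorem~\ref{thm:1}, recentred at $(T,0)$, on $[T,\infty)$. Because the two pieces share the same trace at $t=T$, the gluing introduces no distributional source across $\{t=T\}$, so each such function is a global weak solution of \eqref{eq:euler} with one and the same Lipschitz initial datum $(\varrho_0,v_0)$; distinct continuations yield distinct solutions, hence infinitely many. Energy conservation holds on all of $[0,\infty)\times\mathbb{R}^2$: the lower piece is classical and the upper pieces conserve energy by Theorem~\ref{thm:1}, and since the traces match, the identity \eqref{eq:eneq} carries across the interface $\{t=T\}$, a set of measure zero in time, without defect.

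The main obstacle is the construction of the second step: one must exhibit states $(\varrho_\pm,v_\pm)$ that simultaneously satisfy the hypotheses of Theorem~\ref{thm:1} and are joined by a single compressive simple wave, and then verify that the associated centred compression focuses to exactly the desired Riemann jump at a single point while remaining a genuine energy conserving classical solution up to the focusing time. The remaining subtlety is bookkeeping: checking that the two building blocks agree as traces at $t=T$ so that the glued field is a weak solution in the sense of \eqref{eq:euler}, and that no energy is created at the gluing interface.
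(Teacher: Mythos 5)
Your proposal follows essentially the same route as the paper: there, too, one prepends a centered compression wave that focuses exactly into the Riemann jump of Theorem~\ref{thm:1} at a positive time (the paper uses $t=1$) and then continues with the infinitely many energy conserving solutions, the lower Lipschitz piece conserving energy because it is a classical solution. The ``main obstacle'' you flag is settled in the paper by a time-reversal trick rather than by tuning characteristics by hand: since $(\varrho,v)(t,x)\mapsto(\varrho,v)(-t,-x)$ maps solutions to solutions, and since for the specific states \eqref{eq:specialdata} the Riemann problem with switched states \eqref{eq:switchedriemann} is solved by a single self-similar rarefaction (\cite[Proposition 1.3]{mk17}), evaluating that rarefaction at time $1$ yields precisely the Lipschitz, energy conserving compression profile your argument requires.
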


\section{Proof of theorem \ref{thm:1}}

\subsection{Definitions}
We proceed as in \cite{chio14} and therefore recall the definition of a fan partition. 

\begin{defn} \emph{(fan partition, see \cite[Definition 4]{chio14})}
	Let \mbox{$\mu_0<\mu_1$} real numbers. A fan partition of $(0,\infty)\times\mathbb{R}^2$ consists of three open sets $\Omega_-,\Omega_1,\Omega_+$ of the form
	\begin{align*}
	\Omega_-&=\{(t,x):t>0\text{ and }x_2<\mu_0\,t\}, \\
	\Omega_1&=\{(t,x):t>0\text{ and }\mu_{0}\,t<x_2<\mu_1\,t\}, \\
	\Omega_+&=\{(t,x):t>0\text{ and }x_2>\mu_1\,t\}.
	\end{align*}
	\label{defn:fanpart}
\end{defn}

Furthermore we define 
\begin{align*}
	\S&:=\{M\in\mathbb{R}^{2\times 2}\,|\,M\text{ symmetric}\}\text{ and } \\ \Sz&:=\{M\in\S\,|\,\tr(M)=0\}.
\end{align*}

We also recall the definition of a fan subsolution, where we slightly adjust \cite[Defnitions 5 and 6]{chio14}
for our needs. More precisely we have an equality in \eqref{eq:fansubs4} (see below) in contrast to the inequality in \cite[Definition 6]{chio14}.

\begin{defn} \emph{(energy conserving fan subsolution, cf. \cite[Definitions 5 and 6]{chio14})}
	An energy conserving fan subsolution to the Euler system \eqref{eq:euler} with initial condition \eqref{eq:riemann} is a triple $(\overline{\varrho},\overline{v},\overline{u}):(0,\infty)\times\mathbb{R}^2\rightarrow(\mathbb{R}^+\times\mathbb{R}^2\times\Sz)$ of piecewise constant functions, which satisfies the following properties:
	\begin{enumerate}
		\item There exists a fan partition of $(0,\infty)\times\mathbb{R}^2$ and constants $\varrho_1\in\mathbb{R}^+$, $v_1\in\mathbb{R}^2$ and $u_1\in\Sz$, such that
		\begin{align*}
		(\overline{\varrho},\overline{v},\overline{u})&=\sum\limits_{i\in\{-,+\}} \bigg(\varrho_i\,,\,v_i\,,\,v_i\otimes v_i - \frac{|v_i|^2}{2}\,\id\bigg)\,\mathbf{1}_{\Omega_i} + (\varrho_1\,,\,v_1\,,\,u_1)\,\mathbf{1}_{\Omega_1},
		\end{align*}
		where $(\varrho_{\pm},v_{\pm})$ are the given initial states. 
		\item There is a constant $C_1\in\mathbb{R}^+$ such that
		\begin{equation*}
		v_1\otimes v_1-u_1 < \frac{C_1}{2}\,\id
		\end{equation*}
		in the sense of definiteness.
		\item For all test functions $(\psi,\phi)\in C_c^\infty([0,\infty)\times\mathbb{R}^2,\mathbb{R}\times\mathbb{R}^2)$ the following identities hold:
		\begin{align*}
		\int_0^\infty\int_{\mathbb{R}^2}\big(\overline{\varrho}\,\partial_t\psi + \overline{\varrho}\,\overline{v}\cdot\nabla_x\psi\big)\dd x\,\dd t + \int_{\mathbb{R}^2} \varrho_0(x)\,\psi(0,x)\,\dd x\ &=\ 0, \\
		\int_0^\infty\int_{\mathbb{R}^2}\Bigg[\overline{\varrho}\,\overline{v}\cdot\partial_t\phi + \overline{\varrho}\,\Big(\big(\overline{v}\otimes \overline{v}\big)\,\mathbf{1}_{\Omega_- \cup \Omega_+} + u_1\,\mathbf{1}_{\Omega_1}\Big):D_x\phi\qquad\qquad\qquad&  \\
		+ \bigg(p(\overline{\varrho}) +\frac{1}{2}\,\varrho_1\,C_1\,\mathbf{1}_{\Omega_1}\bigg)\,\text{div}_x\phi\Bigg]\dd x\,\dd t  + \int_{\mathbb{R}^2} \varrho_0(x)\,v_0(x)\cdot\phi(0,x)\,\dd x\  &=\ 0. 
		\end{align*}
		\item For every non-negative test function $\varphi\in C_c^\infty([0,\infty)\times\mathbb{R}^2,\mathbb{R}_0^+)$ the equation
		\begin{align}
		&\int_0^\infty\int_{\mathbb{R}^2}\Bigg[\bigg(P(\overline{\varrho}) + \frac{1}{2}\,\overline{\varrho}\,\Big(|\overline{v}|^2\,\mathbf{1}_{\Omega_- \cup \Omega_+} + C_1\,\mathbf{1}_{\Omega_1}\Big)\bigg)\,\partial_t\varphi \notag\\
		&\qquad+ \bigg(P(\overline{\varrho})+p(\overline{\varrho}) + \frac{1}{2}\,\overline{\varrho}\,\Big(|\overline{v}|^2\,\mathbf{1}_{\Omega_- \cup \Omega_+} + C_1\,\mathbf{1}_{\Omega_1}\Big)\bigg)\,\overline{v}\cdot\nabla_x\varphi\Bigg]\dd x\,\dd t \notag\\
		&\quad+ \int_{\mathbb{R}^2} \bigg(P(\varrho_0(x)) + \varrho_0(x)\,\frac{|v_0(x)|^2}{2}\bigg)\,\varphi(0,x)\,\dd x \quad= \quad 0 
		\label{eq:fansubs4}
		\end{align}
		is fulfilled.
	\end{enumerate}
	\label{defn:fansubs}
\end{defn}

\subsection{Sufficient condition for non-uniqueness}

We obtain a slightly adjusted version of \cite[Proposition 3.1]{chio14}.

\begin{thm} \emph{(cf. \cite[Proposition 3.1]{chio14})} 
	Let $(\varrho_\pm,v_\pm)$ be such that there exists an energy conserving fan subsolution $(\overline{\varrho},\overline{v},\overline{u})$ to \eqref{eq:euler}, \eqref{eq:riemann}. Then there are infinitely many energy conserving weak solutions $(\varrho,v)$ to \eqref{eq:euler}, \eqref{eq:riemann} with the following properties:
	\begin{itemize}
		\item $\varrho=\overline{\varrho}$,
		\item $v(t,x)=\overline{v}(t,x)$ for almost all $(t,x)\in \Omega_- \cup \Omega_+$, 
		\item $|v(t,x)|^2=C_1$ for almost all $(t,x)\in \Omega_1$.
	\end{itemize}
	\label{thm:condition}
\end{thm}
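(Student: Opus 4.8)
The plan is to reduce Theorem~\ref{thm:condition} to the abstract convex integration machinery of De Lellis--Sz\'ekelyhid\"i, exactly as in \cite{chio14}, but tracking that the energy is conserved rather than merely dissipated. The starting point is to fix an energy conserving fan subsolution $(\overline{\varrho},\overline{v},\overline{u})$ and to work only inside the middle region $\Omega_1$, where $\overline{\varrho}=\varrho_1$, $\overline{v}=v_1$ and $\overline{u}=u_1$ are constant. On $\Omega_-\cup\Omega_+$ the candidate solution is simply declared equal to the subsolution, so I would first check that there the momentum and energy identities of Definition~\ref{defn:fansubs} already coincide with the weak formulations of \eqref{eq:euler} and \eqref{eq:eneq}; this holds because on these outer regions $\overline{u}=\overline{v}\otimes\overline{v}-\tfrac{|\overline{v}|^2}{2}\id$ and the stand-in $C_1$ for $|\overline{v}|^2$ is replaced by the genuine $|\overline{v}|^2$.

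The core step is to produce, inside $\Omega_1$, infinitely many perturbed velocity fields $v=v_1+w$ with $\div_x(\varrho_1 w)=0$ such that the momentum flux is repaired and the kinetic energy is pinned at the prescribed level. Concretely I would set up the linear system for $(v,u)$ on $\Omega_1$ by writing $v\otimes v-u_1=\tfrac{C_1}{2}\id$ pointwise; condition~(2), $v_1\otimes v_1-u_1<\tfrac{C_1}{2}\id$, is precisely the strict subsolution inequality that makes the relevant constraint set nonempty with interior. I would then invoke the plane-wave/$h$-principle lemma (the analogue of \cite[Lemma 3.2 or Proposition 2.1]{chio14}, itself from \cite{dls09,dls10}): given the constant coefficients $\varrho_1,v_1,u_1,C_1$ satisfying the strict inequality, there exist infinitely many $L^\infty$ maps $(v,u)$ on $\Omega_1$, agreeing with $(v_1,u_1)$ on $\partial\Omega_1$, solving the linear PDE constraints in the sense of distributions, and satisfying $|v(t,x)|^2=C_1$ a.e. This is the mechanism that simultaneously delivers non-uniqueness (infinitely many such $w$) and the pointwise energy identity that upgrades the inequality of \cite{chio14} to an equality.

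The verification then consists of gluing: I would show that the functions obtained on $\Omega_1$, together with the subsolution values on $\Omega_-\cup\Omega_+$, satisfy the full weak formulations \eqref{eq:euler} and the energy equality \eqref{eq:eneq} globally. The mass equation is immediate since $\varrho\equiv\overline{\varrho}$ and $v=\overline{v}$ off $\Omega_1$ while $\div_x(\varrho_1 w)=0$ inside; the momentum equation follows because $\varrho_1(v\otimes v)=\varrho_1 u_1+\tfrac{\varrho_1 C_1}{2}\id$ a.e. on $\Omega_1$ reproduces exactly the flux $\varrho_1 u_1+(p(\varrho_1)+\tfrac12\varrho_1 C_1)\id$ appearing in condition~(3); and the energy equality follows because $|v|^2=C_1$ a.e. turns the $C_1$-weighted energy identity \eqref{eq:fansubs4} into the genuine energy equation \eqref{eq:eneq} with no leftover sign term. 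The main obstacle, and the only place real work is needed, is the convex integration lemma itself: one must verify that the wave cone and the constraint set $\mathcal{K}$ attached to $(\varrho_1,v_1,u_1,C_1)$ meet the hypotheses (the relevant $\Lambda$-convex hull contains the subsolution state in its interior precisely under condition~(2)), so that the Baire-category/iteration argument yields infinitely many solutions with the \emph{exact} kinetic energy $C_1$ rather than values merely bounded by it. Once the energy level is fixed pointwise, the passage from inequality to equality in the global energy balance is automatic, which is the whole point of adopting the equality \eqref{eq:fansubs4} in Definition~\ref{defn:fansubs}.
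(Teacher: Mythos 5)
Your proposal is correct and takes essentially the same route as the paper, whose proof simply defers to the convex-integration arguments of \cite[Proposition 3.1]{chio14} and \cite[Proposition 3.6]{chio15} and adds the one observation you also make: since the produced solutions satisfy $|v|^2=C_1$ a.e.\ in $\Omega_1$, the subsolution's energy \emph{equality} \eqref{eq:fansubs4} turns into the energy equation \eqref{eq:eneq}. One notational slip to fix: in the pointwise constraint and in the momentum-flux verification it must be the perturbed matrix field $u$, not the constant $u_1$, that appears (i.e.\ $v\otimes v-u=\tfrac{C_1}{2}\,\id$ with the pair $(v,u)$ solving the linear system and matching $(v_1,u_1)$ so that no boundary terms arise), since requiring $v\otimes v-u_1=\tfrac{C_1}{2}\,\id$ with $u_1$ fixed would force $v\otimes v$ to be constant and destroy the non-uniqueness.
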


\begin{proof}
	For the proof we refer to the proofs of \cite[Proposition 3.1]{chio14} and \cite[Proposition 3.6]{chio15}. Note that the energy conservation \eqref{eq:fansubs4} of the fan subsolution ensures that the energy equation \eqref{eq:eneq} holds, i.e. that the weak solutions are energy conserving.
\end{proof}

\subsection{Algebraic equations and inequalities}
As in \cite[Proposition 4.1]{chio14}, definition \ref{defn:fansubs} can be translated into a system of algebraic equations and inequalities. Note again that we get here energy equations instead of energy inequalities.

\begin{prop} \emph{(cf. \cite[Proposition 4.1]{chio14})} 
	Let $(\varrho_\pm,v_\pm)$ be given. The constants $\mu_0,\mu_1\in\mathbb{R}$, $\varrho_1\in\mathbb{R}^+$, 
	\begin{align*}
	v_1&=\left(
	\begin{array}[c]{l}
	v_{1\,1}\\
	v_{1\,2}
	\end{array}\right) \in\mathbb{R}^2, &u_1&=\left(
	\begin{array}[c]{rr}
	u_{1\,11} & u_{1\,12} \\
	u_{1\,12} & -u_{1\,11}
	\end{array} \right)\in\Sz
	\end{align*}
	and $C_1\in\mathbb{R}^+$ define an energy conserving fan subsolution to \eqref{eq:euler}, \eqref{eq:riemann} if and only if they fulfill the following algebraic equations and inequalities: 
	\begin{itemize}
		\item Order of the speeds:
		\begin{equation}
		\mu_0<\mu_1
		\label{eq:order}
		\end{equation}
		\item Rankine Hugoniot conditions on the left interface:
		\begin{align}
		\mu_0\,(\varrho_- - \varrho_1) &= \varrho_-\,v_{-\,2} - \varrho_1\,v_{1\,2} \label{eq:rhl1}\\
		\mu_0\,(\varrho_-\,v_{-\,1} - \varrho_1\,v_{1\,1}) &= \varrho_-\,v_{-\,1}\,v_{-\,2} - \varrho_1\,u_{1\,12} \label{eq:rhl2}\\
		\mu_0\,(\varrho_-\,v_{-\,2} - \varrho_1\,v_{1\,2}) &= \varrho_-\,v_{-\,2}^2 + \varrho_1\,u_{1\,11} + p(\varrho_-) - p(\varrho_1) - \varrho_1\,\frac{C_1}{2} \label{eq:rhl3}
		\end{align}
		\item Rankine Hugoniot conditions on the right interface:
		\begin{align}
		\mu_1\,(\varrho_1 - \varrho_+) &= \varrho_1\,v_{1\,2} - \varrho_+\,v_{+\,2} \label{eq:rhr1}\\
		\mu_1\,(\varrho_1\,v_{1\,1} - \varrho_+\,v_{+\,1}) &= \varrho_1\,u_{1\,12} - \varrho_+\,v_{+\,1}\,v_{+\,2} \label{eq:rhr2}\\
		\mu_1\,(\varrho_1\,v_{1\,2} - \varrho_+\,v_{+\,2}) &= - \varrho_1\,u_{1\,11} - \varrho_+\,v_{+\,2}^2 + p(\varrho_1) - p(\varrho_+) + \varrho_1\,\frac{C_1}{2} \label{eq:rhr3}
		\end{align}
		\item Subsolution condition:
		\begin{align}
		v_{1\,1}^2 + v_{1\,2}^2 &< C_1 \label{eq:sc1}\\
		\bigg(\frac{C_1}{2} - v_{1\,1}^2 + u_{1\,11}\bigg) \bigg(\frac{C_1}{2} - v_{1\,2}^2 - u_{1\,11}\bigg) - (u_{1\,12}-v_{1\,1}\,v_{1\,2})^2 &> 0 \label{eq:sc2}
		\end{align}
		\item Energy equation on the left interface:
		\begin{align}
		&\mu_0\,\bigg(P(\varrho_-)+\varrho_-\,\frac{|v_-|^2}{2}-P(\varrho_1)-\varrho_1\,\frac{C_1}{2}\bigg) \notag\\
		&=\big(P(\varrho_-)+p(\varrho_-)\big)\,v_{-\,2}-\big(P(\varrho_1)+p(\varrho_1)\big)\,v_{1\,2}+\varrho_-\,v_{-\,2}\,\frac{|v_-|^2}{2}-\varrho_1\,v_{1\,2}\,\frac{C_1}{2}
		\label{eq:enl}
		\end{align}
		\item Energy equation on the right interface:
		\begin{align}
		&\mu_1\,\bigg(P(\varrho_1)+\varrho_1\,\frac{C_1}{2}-P(\varrho_+)-\varrho_+\,\frac{|v_+|^2}{2}\bigg) \notag\\
		&=\big(P(\varrho_1)+p(\varrho_1)\big)\,v_{1\,2}-\big(P(\varrho_+)+p(\varrho_+)\big)\,v_{+\,2}+\varrho_1\,v_{1\,2}\,\frac{C_1}{2}-\varrho_+\,v_{+\,2}\,\frac{|v_+|^2}{2}
		\label{eq:enr}
		\end{align}
	\end{itemize}
	\label{prop:alggl}
\end{prop}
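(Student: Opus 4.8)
The plan is to prove the claimed equivalence by exploiting that every ingredient of an energy conserving fan subsolution (Definition \ref{defn:fansubs}) is piecewise constant on the fan partition of Definition \ref{defn:fanpart}, so that each distributional condition collapses to algebraic relations. The single tool behind this is the elementary fact that, for fields which are constant on the three regions $\Omega_-,\Omega_1,\Omega_+$ and jump only across the two interfaces $\{x_2=\mu_0\,t\}$ and $\{x_2=\mu_1\,t\}$, the weak form of a balance law $\partial_t U+\div_x F=0$ is equivalent to the Rankine--Hugoniot jump relation ``$\mu\,[U]$ equals the jump of the $x_2$-component of $F$'' on each interface, provided the initial trace of $U$ matches the prescribed datum. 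The order condition \eqref{eq:order} is nothing but the requirement $\mu_0<\mu_1$ already built into the fan partition, so it needs no argument.

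First I would dispatch the continuity and momentum identities in item (3) of Definition \ref{defn:fansubs}. Splitting $\int_0^\infty\!\int_{\mathbb{R}^2}$ into the three regions and integrating by parts on each, the interior terms vanish because the fields are constant there; the contributions along $t=0$ arising from $\Omega_\pm$ combine with the prescribed initial integrals and cancel exactly, since $\overline{\varrho}=\varrho_\pm$ and $\overline{v}=v_\pm$ agree with $(\varrho_0,v_0)$ on the respective half-planes (note $\Omega_1$ meets $\{t=0\}$ only in a null set). What survives are the interface integrals, which must vanish for all test functions and hence give the jump relations. For the continuity equation this yields \eqref{eq:rhl1} and \eqref{eq:rhr1}. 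For the momentum equation the flux on $\Omega_\pm$ is $\varrho_\pm\,v_\pm\otimes v_\pm+p(\varrho_\pm)\,\id$, whereas on $\Omega_1$ it is the \emph{subsolution} flux $\varrho_1\,u_1+\big(p(\varrho_1)+\half\,\varrho_1\,C_1\big)\id$; taking the $e_2$-component (the interface normal points in the $x_2$-direction) and using $\tr u_1=0$, i.e.\ $(u_1)_{22}=-u_{1\,11}$, produces the two components \eqref{eq:rhl2}, \eqref{eq:rhl3} on the left and \eqref{eq:rhr2}, \eqref{eq:rhr3} on the right.

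Next I would translate the subsolution condition in item (2). Writing $M:=v_1\otimes v_1-u_1-\frac{C_1}{2}\,\id$, item (2) asks that $M$ be negative definite. For a symmetric $2\times 2$ matrix this is equivalent to $\tr M<0$ together with $\det M>0$: the determinant condition forces the two diagonal entries to share a sign, and the trace condition then makes both negative, while conversely negative definiteness gives a negative trace and a positive determinant. Since $\tr M=|v_1|^2-C_1$, the trace condition is exactly \eqref{eq:sc1}; and a direct computation of $\det M$, extracting the two sign factors from the diagonal entries $v_{1\,1}^2-u_{1\,11}-\frac{C_1}{2}$ and $v_{1\,2}^2+u_{1\,11}-\frac{C_1}{2}$, reproduces the product-minus-square expression \eqref{eq:sc2}.

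Finally, the energy balance in item (4) is once more a scalar balance law, now with piecewise constant density $P(\overline{\varrho})+\half\,\overline{\varrho}\big(|\overline{v}|^2\,\mathbf{1}_{\Omega_-\cup\Omega_+}+C_1\,\mathbf{1}_{\Omega_1}\big)$ and the corresponding flux, so the same Rankine--Hugoniot reduction applies verbatim and delivers one equation per interface, namely \eqref{eq:enl} and \eqref{eq:enr}. The only — but decisive — difference from \cite[Proposition 4.1]{chio14} is that here \eqref{eq:fansubs4} is posed as an equality (and the test functions $\varphi$ range over all non-negative ones, so no inequality sign can be lost in the reduction), whence the jump relations emerge as genuine equalities rather than inequalities; this is exactly what upgrades the construction from energy-dissipating to energy-conserving. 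Reading every step in both directions yields the stated ``if and only if''. I expect the main — though entirely routine — obstacle to be the careful accounting of normals and signs in the Rankine--Hugoniot reduction of the vectorial momentum identity, together with verifying the matrix-definiteness equivalence in both directions.
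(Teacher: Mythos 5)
Your proposal is correct and takes essentially the same route as the paper: the paper's own proof is a one-line deferral to \cite[Proposition 5.1]{chio15}, and the argument given there is precisely your reduction — piecewise constant fan fields turn each weak balance law into Rankine--Hugoniot jump relations across the two interfaces, while the definiteness requirement in Definition \ref{defn:fansubs} becomes the trace and determinant conditions \eqref{eq:sc1}--\eqref{eq:sc2}. You also correctly handle the only point where this proposition deviates from the admissible-solution case, namely that the equality \eqref{eq:fansubs4}, although tested only against non-negative functions, still forces genuine interface equalities \eqref{eq:enl} and \eqref{eq:enr} rather than inequalities.
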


\begin{proof}
	The above proposition can be proved analogously to \cite[Proposition 5.1]{chio15}.
\end{proof}

\subsection{Proof}

Finally we prove theorem \ref{thm:1}.

\begin{proof}
	Let the initial data be given by 
	\begin{align}
		\varrho_- &= 1 & \varrho_+ &= 4 \notag\\
		v_- &= \left(\begin{array}{c}
			0 \\
			2\sqrt{2}
		\end{array}\right) & v_+ &= \left(\begin{array}{c}
			0 \\
			0
		\end{array}\right).
	\label{eq:specialdata}
	\end{align}
	
	We set 
	\begin{align} 
		\mu_0 &= -\frac{7}{2\sqrt{2}} &
		\mu_1 &= 0 \notag\\
		\varrho_1 &= \frac{15}{7} &
		v_1 &= \left(\begin{array}{c}
			0 \\
			0
		\end{array} \right) \notag\\
		u_1 &= \left(\begin{array}{cc}
			-\frac{29}{15} & 0\\
			0 & \frac{29}{15}
		\end{array}\right) &
		C_1 &= \frac{712}{105}.
	\label{eq:values}
	\end{align}
	
	Simple computations show that the equations and inequalities of proposition \ref{prop:alggl} hold. Hence there exists an energy conserving fan subsolution and according to theorem \ref{thm:condition} infinitely many energy conserving weak solutions to \eqref{eq:euler} with initial data \eqref{eq:riemann}, \eqref{eq:specialdata}.
\end{proof}

The reader might wonder where the values in \eqref{eq:values} come from. Hence we want to expose the way we reached to them. 

Let us begin by looking for weak solutions to \eqref{eq:euler} with initial data \eqref{eq:riemann}, \eqref{eq:specialdata} that are \emph{admissible} instead of \emph{energy conserving}. This means that the energy inequality \eqref{eq:enineq} holds rather than the energy equation \eqref{eq:eneq}. To this end, as shown in \cite{chio15}, \cite{chio14} and \cite{mk17}, one has to look for \emph{admissible} fan subsolutions. Admissible fan subsolutions are similar to energy conserving fan subsolutions as defined in definition \ref{defn:fansubs}, with the only difference that the energy equation \eqref{eq:fansubs4} turns into an energy inequality. One ends up with algebraic equations and inequalities, see e.g. \cite[Proposition 5.1]{chio15}, like those presented in proposition \ref{prop:alggl} with the difference that again the energy equations \eqref{eq:enl}, \eqref{eq:enr} are replaced by energy inequalities. Since there are six equations for eight unknowns, the idea in \cite{chio14} and also in \cite{mk17} was to choose two unknowns as parameters and express the other six unknowns as functions of the two parameters. For convenience one replaces the unknowns $u_{1\,11}$ and $C_1$ by 
\begin{align*}
	\delta_1 &= \frac{C_1}{2}-v_{1\,2}^2 - u_{1\,11}, & \delta_2 &= \frac{C_1}{2}-v_{1\,1}^2 + u_{1\,11}
\end{align*}
and chooses $\varrho_1,\delta_2$ as parameters. For the special initial data \eqref{eq:specialdata} one obtains the following proposition, shown by the authors in \cite{mk17}. 

\begin{prop} \emph{(see \cite[Theorem 5.2]{mk17})} 
	There exists an admissible fan subsolution to \eqref{eq:euler} with initial data \eqref{eq:riemann}, \eqref{eq:specialdata} if there exist constants $\varrho_1,\delta_2\in\mathbb{R}^+$ that fulfill
	\begin{align} 
	\varrho_- &< \varrho_1 < \varrho_+, \label{eq:SRcond1} \\[0.3cm]
	\delta_1(\varrho_1) &>0, \label{eq:SRcond2} 
	\end{align} 
	\vspace{-0.7cm}
	\begin{align}
	\begin{split}
	&(v_{1\,2}(\varrho_1)-v_{-\,2})\,\bigg(p(\varrho_-)+p(\varrho_1)-2\,\frac{\varrho_1\,P(\varrho_-)-\varrho_-\,P(\varrho_1)}{\varrho_- -\varrho_1}\bigg) \\
	&\qquad\leq\delta_1(\varrho_1)\,\varrho_1\,(v_{1\,2}(\varrho_1)+v_{-\,2})-(\delta_1(\varrho_1)+\delta_2)\,\frac{\varrho_-\,\varrho_1\,(v_{1\,2}(\varrho_1)-v_{-\,2})}{\varrho_- -\varrho_1},
	\end{split} \label{eq:SRcond3} \\[0.5cm] 
	\begin{split}
	&(v_{+\,2}-v_{1\,2}(\varrho_1))\,\bigg(p(\varrho_1)+p(\varrho_+)-2\,\frac{\varrho_+\,P(\varrho_1)-\varrho_1\,P(\varrho_+)}{\varrho_1 -\varrho_+}\bigg) \\
	&\qquad\leq-\delta_1(\varrho_1)\,\varrho_1\,(v_{+\,2}+v_{1\,2}(\varrho_1))+(\delta_1(\varrho_1)+\delta_2)\,\frac{\varrho_1\,\varrho_+\,(v_{+\,2}-v_{1\,2}(\varrho_1))}{\varrho_1 -\varrho_+},
	\end{split} \label{eq:SRcond4}
	\end{align}
	where we define the functions
	\begin{align}
	&v_{1\,2}(\varrho_1) := \frac{1}{\varrho_1\,(\varrho_- - \varrho_+)}\,\Bigg(-\varrho_-\,v_{-\,2}\,(\varrho_+ - \varrho_1) - \varrho_+\,v_{+\,2}\,(\varrho_1 - \varrho_-) \notag\\
	&\ \ + \sqrt{\Big[(\varrho_- - \varrho_+)\,\big(p(\varrho_-) - p(\varrho_+)\big) - \varrho_+\,\varrho_-\,(v_{-\,2} - v_{+\,2})^2\Big]\,(\varrho_1 - \varrho_-)\,(\varrho_+ - \varrho_1)}\Bigg)  \label{eq:SRdefnv12}
	\end{align}
	and
	\begin{equation}
	\begin{split}
	&\delta_1(\varrho_1) := -\frac{p(\varrho_1) - p(\varrho_-)}{\varrho_1} + \frac{\varrho_-\,(\varrho_1 - \varrho_-)}{\varrho_1^2\,(\varrho_- - \varrho_+)^2}\,\Bigg(\varrho_+\,(v_{-\,2} - v_{+\,2}) \\
	&\ \ + \sqrt{\Big[(\varrho_- - \varrho_+)\,\big(p(\varrho_-) - p(\varrho_+)\big) - \varrho_+\,\varrho_-\,(v_{-\,2} - v_{+\,2})^2\Big]\,\frac{\varrho_+ - \varrho_1}{\varrho_1 - \varrho_-}}\Bigg)^2.  
	\end{split}
	\label{eq:SRdefndelta1}
	\end{equation} 
	
	Note that these functions are well-defined for $\varrho_- < \varrho_1 < \varrho_+$ and for the initial states \eqref{eq:specialdata}.
\label{prop:exadmfansubs}
\end{prop}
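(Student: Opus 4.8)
The plan is to invoke the \emph{admissible} analogue of Proposition \ref{prop:alggl}, in which the two energy equalities \eqref{eq:enl} and \eqref{eq:enr} are relaxed to $\leq$. It then suffices to produce constants $\mu_0,\mu_1,\varrho_1,v_1,u_1,C_1$ satisfying the order condition \eqref{eq:order}, the six Rankine--Hugoniot equations \eqref{eq:rhl1}--\eqref{eq:rhr3}, the subsolution inequalities \eqref{eq:sc1}--\eqref{eq:sc2}, and the relaxed versions of \eqref{eq:enl}--\eqref{eq:enr}. Given the two parameters $\varrho_1,\delta_2$, I would construct all eight unknowns explicitly and verify each relation in turn, exploiting heavily the special structure of the data \eqref{eq:specialdata}, namely $v_{-\,1}=v_{+\,1}=v_{+\,2}=0$.

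First I would exploit the vanishing horizontal velocities. Substituting $v_{-\,1}=v_{+\,1}=0$ into \eqref{eq:rhl2} and \eqref{eq:rhr2} yields $\mu_0\,v_{1\,1}=u_{1\,12}=\mu_1\,v_{1\,1}$; since \eqref{eq:order} forces $\mu_0\neq\mu_1$, this gives $v_{1\,1}=0$ and $u_{1\,12}=0$. It is then natural to pass to the variables $\delta_1=\frac{C_1}{2}-v_{1\,2}^2-u_{1\,11}$ and $\delta_2=\frac{C_1}{2}-v_{1\,1}^2+u_{1\,11}$, so that $C_1=\delta_1+\delta_2+v_{1\,2}^2$ and $u_{1\,11}=\half(\delta_2-\delta_1-v_{1\,2}^2)$. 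The mass conditions \eqref{eq:rhl1} and \eqref{eq:rhr1} express $\mu_0$ and $\mu_1$ as rational functions of $\varrho_1$ and $v_{1\,2}$. Substituting these together with $u_{1\,11},C_1$ into \eqref{eq:rhl3} and \eqref{eq:rhr3} produces two relations in which $\delta_1$ enters with opposite signs; adding them eliminates $\delta_1$ and, after clearing denominators, leaves a single quadratic equation in $v_{1\,2}$, whose relevant root is \eqref{eq:SRdefnv12}. Back-substituting into \eqref{eq:rhr3} then yields the closed formula \eqref{eq:SRdefndelta1} for $\delta_1(\varrho_1)$.

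With $v_{1\,2}(\varrho_1)$ and $\delta_1(\varrho_1)$ in hand, $\varrho_1$ and $\delta_2$ remain the two free parameters. Because $v_{1\,1}=u_{1\,12}=0$, the subsolution inequality \eqref{eq:sc2} collapses to $\delta_1\,\delta_2>0$, while \eqref{eq:sc1} reads $\delta_1+\delta_2>0$; together these are equivalent to $\delta_1,\delta_2>0$, which is exactly the requirement $\delta_2\in\mathbb{R}^+$ together with \eqref{eq:SRcond2}. Reality of the square root in \eqref{eq:SRdefnv12}--\eqref{eq:SRdefndelta1} and the order condition \eqref{eq:order} both translate into $\varrho_-<\varrho_1<\varrho_+$, i.e. \eqref{eq:SRcond1}. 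Finally, I would substitute the constructed quantities into the two relaxed energy relations; after clearing the denominators coming from $\mu_0,\mu_1$ and rewriting in terms of the pressure potential $P$, the left and right energy inequalities reduce precisely to \eqref{eq:SRcond3} and \eqref{eq:SRcond4}.

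The principal obstacle is purely computational: the faithful reduction of \eqref{eq:enl}--\eqref{eq:enr} to the compact forms \eqref{eq:SRcond3}--\eqref{eq:SRcond4}, since this requires eliminating $\mu_0,\mu_1,u_{1\,11},C_1$ in favour of $\varrho_1,\delta_1(\varrho_1),\delta_2,v_{1\,2}(\varrho_1)$ and recognising the resulting expressions as the stated difference quotients of $P$. A secondary but genuine subtlety is the choice of sign of the square root in \eqref{eq:SRdefnv12}: one must check that this root is the one compatible with \eqref{eq:order} and with $\delta_1(\varrho_1)>0$, and that the discriminant bracket $(\varrho_- - \varrho_+)\big(p(\varrho_-)-p(\varrho_+)\big)-\varrho_+\,\varrho_-\,(v_{-\,2}-v_{+\,2})^2$ is positive for the data \eqref{eq:specialdata} (indeed it equals $13$), so that the functions are real throughout $\varrho_-<\varrho_1<\varrho_+$.
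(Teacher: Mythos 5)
Your proposal is correct and takes essentially the same route as the paper, whose own ``proof'' is simply a citation of \cite[Theorem 5.2]{mk17}: relax the energy equalities of Proposition \ref{prop:alggl} to inequalities, use the special data to force $v_{1\,1}=u_{1\,12}=0$, pass to the variables $\delta_1,\delta_2$, eliminate $\delta_1$ from the two normal-momentum Rankine--Hugoniot conditions to obtain the quadratic whose root is \eqref{eq:SRdefnv12}, and back-substitute to get \eqref{eq:SRdefndelta1}. Your identification of the remaining conditions matches the paper's stated correspondence exactly (subsolution conditions $\leftrightarrow$ $\delta_1,\delta_2>0$, order of speeds $\leftrightarrow$ \eqref{eq:SRcond1}, left/right energy inequalities $\leftrightarrow$ \eqref{eq:SRcond3}--\eqref{eq:SRcond4}), including the positivity of the discriminant bracket, which indeed equals $13$ for the data \eqref{eq:specialdata}.
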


Here the conditions $\delta_2>0$ and \eqref{eq:SRcond2} ensure the subsolution conditions, \eqref{eq:SRcond1} guarantees the correct order of the speeds and the inequalities \eqref{eq:SRcond3} and \eqref{eq:SRcond4} correspond to the energy inequalities for the left and the right interface, respectively (see proof of \cite[Theorem 5.2]{mk17}).

Next we are going to plot the regions in the $\varrho_1$-$\delta_2$-plane where the inequalities \eqref{eq:SRcond1} - \eqref{eq:SRcond4} are fulfilled, see figure \ref{fig:regions}. Proposition \ref{prop:exadmfansubs} claims that each point $(\varrho,\delta_2)$ lying in the region shown in figure \ref{fig:regions} (d) corresponds to an admissible fan subsolution. 

\begin{figure}[hbt]
	\centering
	\subfigure[Condition \eqref{eq:SRcond3}]{\includegraphics[width=0.35\textwidth]{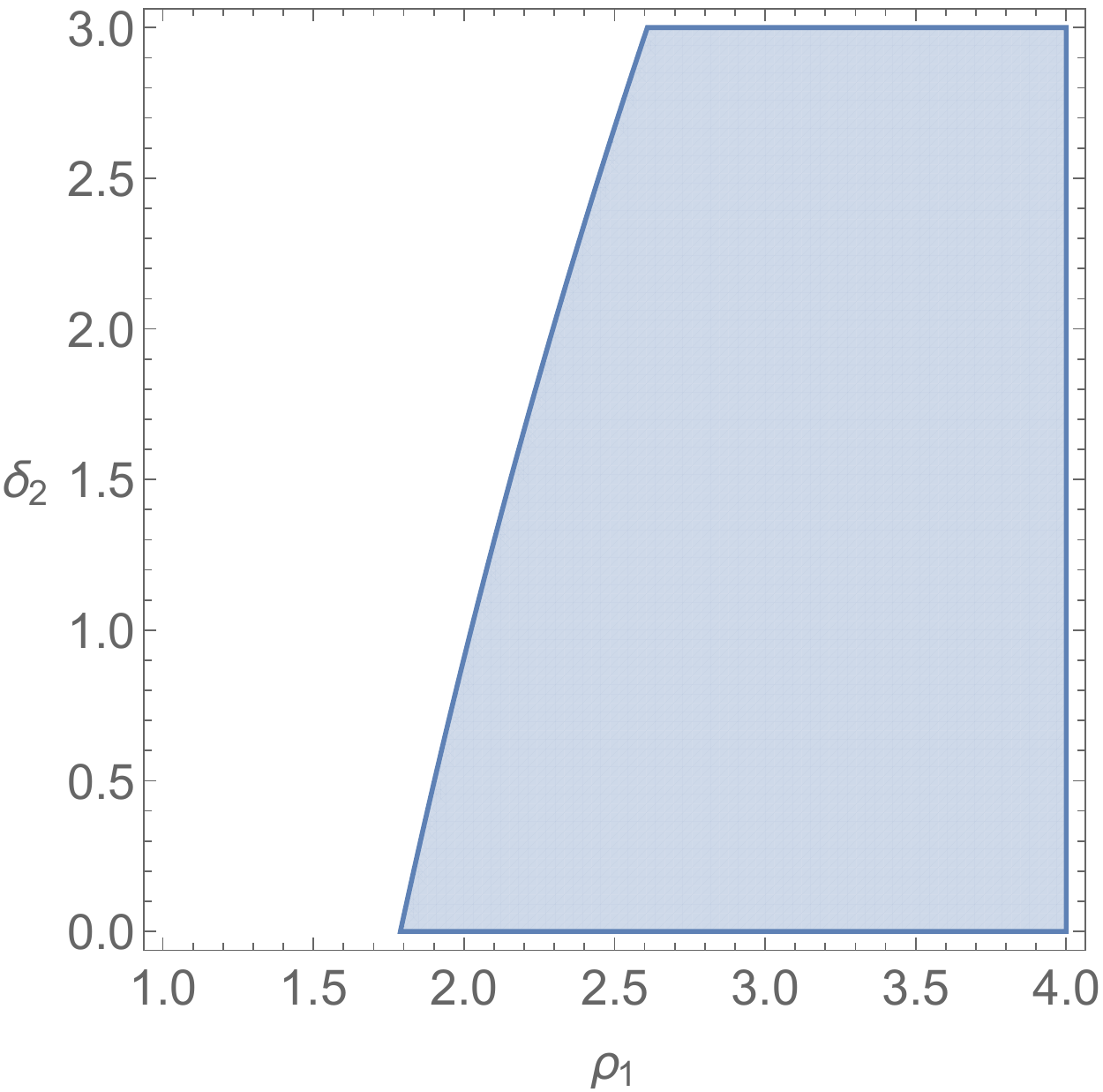}}\hspace{30pt}
	\subfigure[Condition \eqref{eq:SRcond4}]{\includegraphics[width=0.35\textwidth]{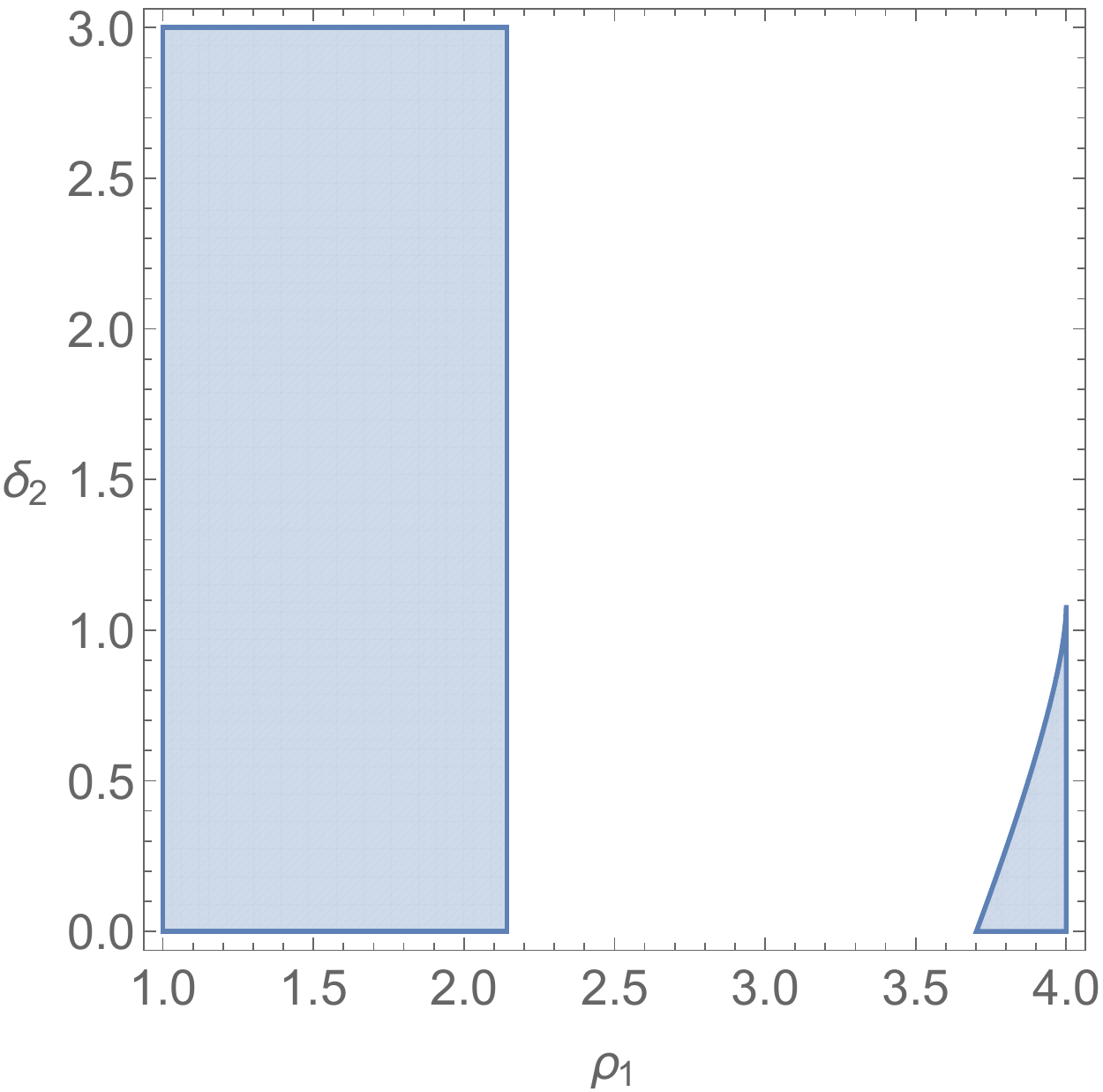}} \\
	\subfigure[Condition \eqref{eq:SRcond2}]{\includegraphics[width=0.35\textwidth]{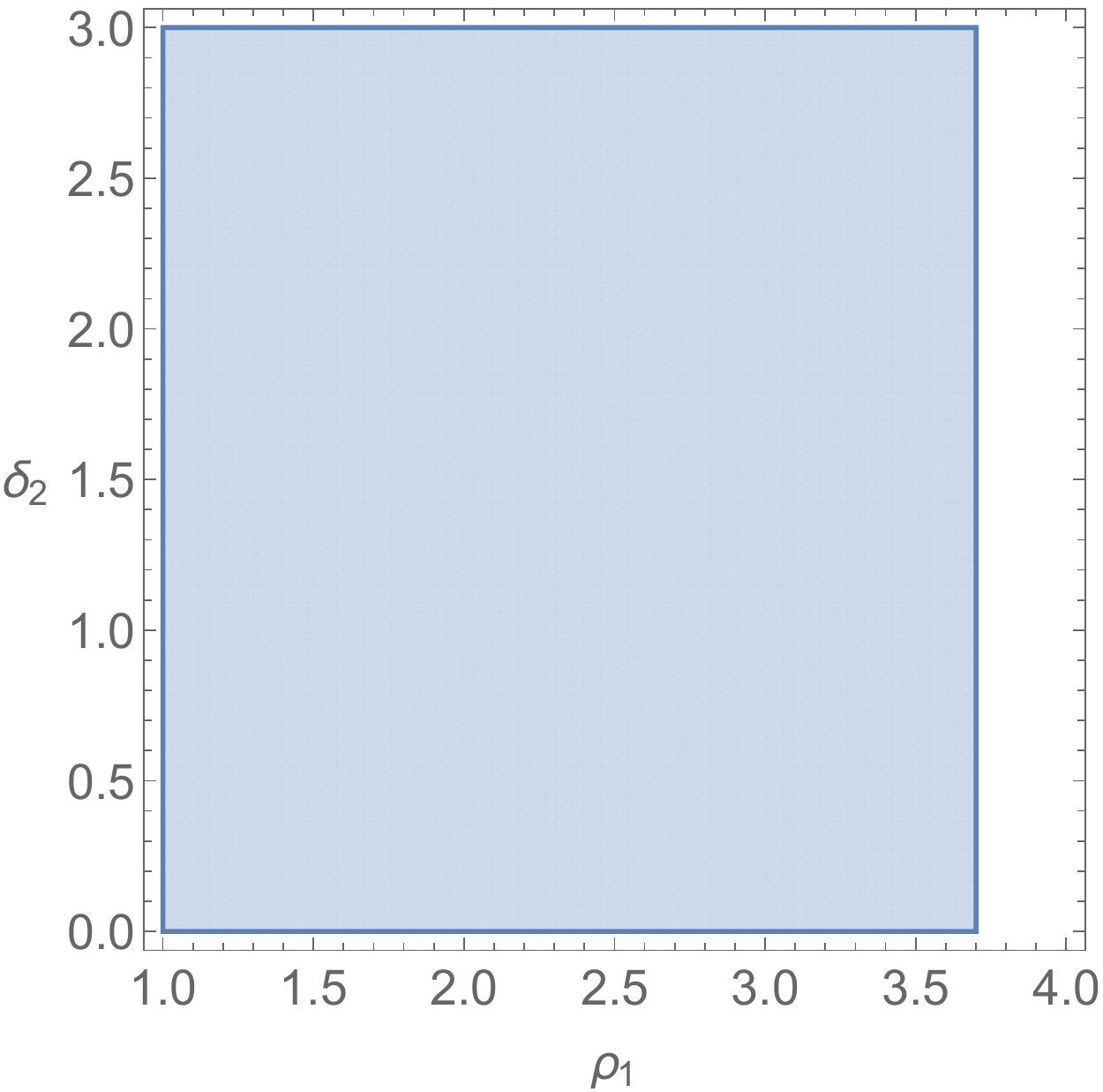}}\hspace{30pt} 
	\subfigure[Condition \eqref{eq:SRcond1} - \eqref{eq:SRcond4}]{\includegraphics[width=0.35\textwidth]{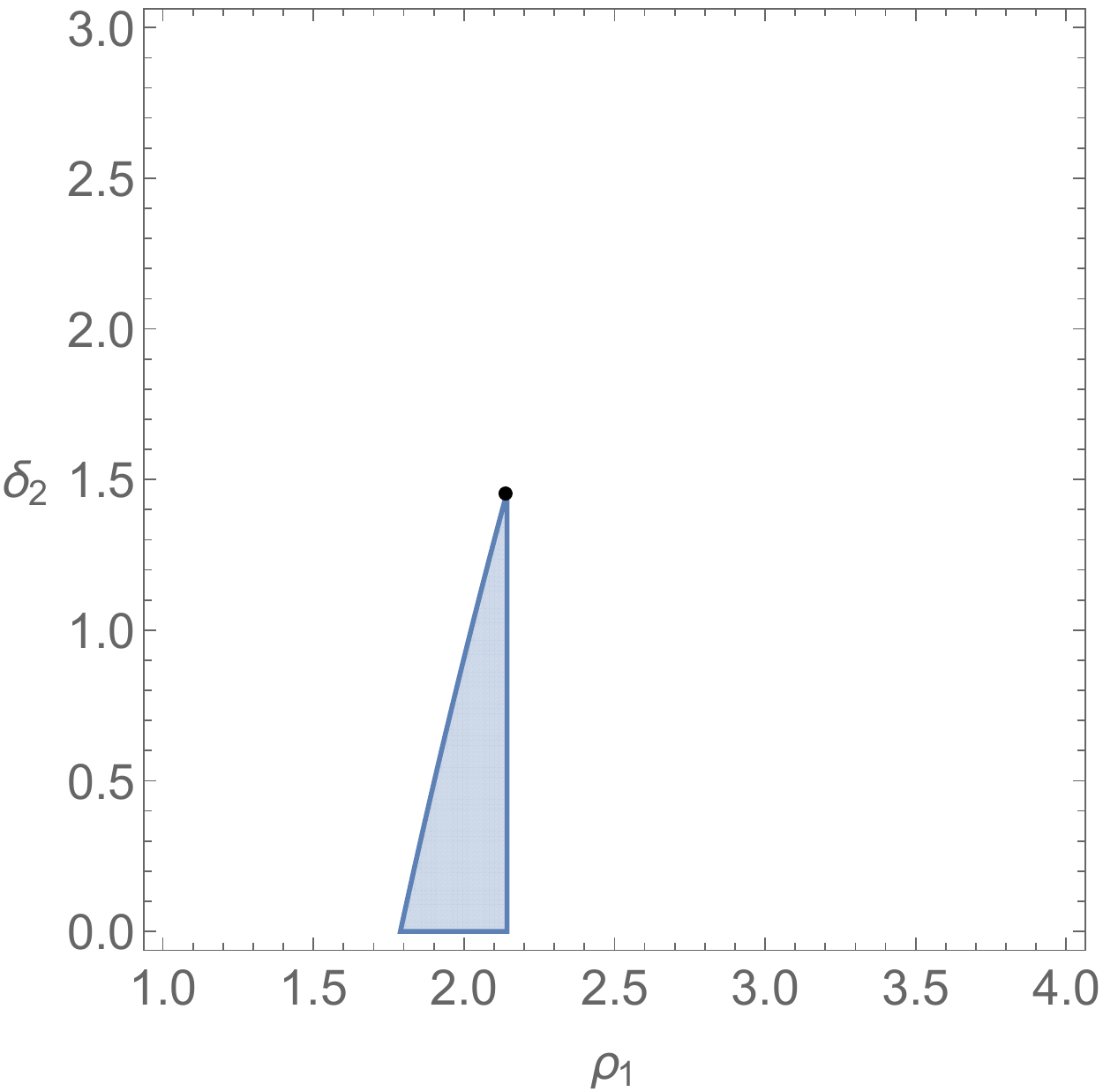}}
	\caption{Regions in the $\varrho_1$-$\delta_2$-plane where the conditions of proposition \ref{prop:exadmfansubs} hold}
\label{fig:regions}
\end{figure}

Let us now return to this paper's aim, namely to find energy conserving solutions. The procedure presented above will lead to a similar claim as stated in proposition \ref{prop:exadmfansubs} with the difference that the inequalities \eqref{eq:SRcond3}, \eqref{eq:SRcond4} are replaced by equations. Therefore, in order to find an energy conserving fan subsolution, we have to find a point $(\varrho_1,\delta_2)$ in the region shown in figure \ref{fig:regions} (c) which in addition lies on the boundary of the regions shown in figures \ref{fig:regions} (a) and (b). Hence it suffices to find the apex in figure \ref{fig:regions} (d). The values \eqref{eq:values} correspond exactly to this apex.

\section{Proof of corollary \ref{cor:2}}

\begin{proof}
	In order to prove corollary \ref{cor:2} we proceed as in the proof of \cite[Corollary 1.2]{chio15}. We solve the system \eqref{eq:euler} with initial data \eqref{eq:riemann}, \eqref{eq:specialdata} backwards in time. It is an easy observation that if $\big(\varrho(t,x),v(t,x)\big)$ is a solution to \eqref{eq:euler} then 
	\begin{equation*}
		\big(\widetilde{\varrho}(t,x),\widetilde{v}(t,x)\big) = \big(\varrho(-t,-x),v(-t,-x)\big)
	\end{equation*}
	solves \eqref{eq:euler}, too. Hence in order to solve \eqref{eq:euler} with initial data \eqref{eq:riemann}, \eqref{eq:specialdata} backwards in time, it suffices to solve \eqref{eq:euler} forward in time, where now the initial states are switched 
	\begin{equation}
	\begin{split}
		(\varrho,v)(0,x) &= (\widetilde{\varrho}_0,\widetilde{v}_0)(x):= (\varrho_0,v_0)(-x)=\left\{
		\begin{array}[c]{ll}
		(\varrho_+,v_+) & \text{ if }x_2<0 \\
		(\varrho_-,v_-) & \text{ if }x_2>0
		\end{array}
		\right. ,
	\end{split}
	\label{eq:switchedriemann}
	\end{equation}
	and $\varrho_\pm,v_\pm$ given in \eqref{eq:specialdata}. 
	
	By well-known methods one can show that there is a self-similar solution $(\widetilde{\varrho},\widetilde{v})$ to \eqref{eq:euler}, \eqref{eq:switchedriemann} which consists of one rarefaction, see \cite[Proposition 1.3]{mk17} and the references therein. Note that rarefaction solutions are Lipschitz in the spatial variable $x$ and they conserve the energy in the sense of \eqref{eq:eneq}. Hence we can simply set the initial data to 
	\begin{equation*}
		(\varrho_0,v_0)(x) := (\widetilde{\varrho},\widetilde{v})(t=1,x),
 	\end{equation*}
 	which is Lipschitz and leads to Riemann data \eqref{eq:riemann}, \eqref{eq:specialdata} for $t=1$. Consequently according to theorem \ref{thm:1} there exist infinitely many energy conserving weak solutions to \eqref{eq:euler} with Lipschitz initial data as above. All these solutions coincide for $t\in[0,1]$.
\end{proof}

\section{Conclusion}
In this note we showed that even if we insist that solutions conserve energy for the two-dimensional isentropic Euler equations one can find Lipschitz initial data which gives rise to infinitely many weak solutions. The proof makes use of the techniques shown in \cite{mk17}. Thus if one wants to find a criterion to rule out these solutions one has to look elsewhere.

\section*{Acknowledgement}
Both authors were partially funded by the European Research Council under the European Union’s Seventh Framework Programme (FP7/2007-2013)/ ERC Grant Agreement 320078. We thank Eduard Feireisl for hosting us.

\newpage

\end{document}